\newtheorem{theorem}{Theorem}[section]
\newtheorem{lemma}[theorem]{Lemma}
\newtheorem{corollary}[theorem]{Corollary}
\theoremstyle{definition}
\newtheorem{remark}[theorem]{Remark}
\newtheorem*{ack}{Acknowledgement}
\newcounter{thmenumerate}
\newcommand\Tk{T_{\ka}}
\newcommand\RR{{\mathbb R}}
\newcommand\eps{\varepsilon}
\newcommand\sss{{\mathcal S}}
\newcommand\dd{\,d}
\newcommand\norm[1]{\ensuremath{\|#1\|}}
\newcommand\upto{\nearrow}
\newcommand{\refT}[1]{Theorem~\ref{#1}}
\newcommand{\refC}[1]{Corollary~\ref{#1}}
\newcommand{\refL}[1]{Lemma~\ref{#1}}
\newcommand\E{{\mathop{\mathbb E{}}\nolimits}}
\newcommand\ka{\kappa}
\newcommand\bpk{{\mathfrak X}_{\ka}}
\newcommand\pto{\overset{\mathrm{p}}{\to}}
\newcommand\aut{\operatorname{aut}}
\newcommand\cc{{\mathrm{c}}}
\newcommand\dcut{{\delta_{\square}}}
\newcommand\set[1]{\ensuremath{\{#1\}}}
\newcommand\bigpar[1]{\bigl(#1\bigr)}
\newcommand\Bigabs[1]{\Bigl|#1\Bigr|}
\newcommand\lrabs[1]{\left|#1\right|}
\newcommand\oi{[0,1]}
\newcommand\ntoo{\ensuremath{{n\to\infty}}}
\newcommand\cC{\mathcal C}
\newcommand\cW{\mathcal W}
\newcommand\qw{^{-1}}
\newcommand\op{o_{\mathrm{p}}}
\newcommand\gl{\lambda}
\newcommand\taun{\tau_n}
\renewcommand{\=}{:=}
\newcommand\ooo{[0,\infty)}
\newcommand\rplus{[0,\infty)}
\newcommand\glw{\gl_W}
\newcommand{\txp}{t_{\mathrm{isol}}^{+}}
\newcommand{\txx}{t_{\mathrm{isol}}^{\times}}
\newcommand{\tx}{t_{\mathrm{isol}}}
\newcommand{\ty}{t_{0}}
\newcommand\cn[1]{\norm{#1}\cut}
\newcommand\cnm[2]{\norm{#1}_{\square,#2}}
\newcommand\cut{_{\square}}
\newcommand\sij{_{ij}}
\newcommand\ab{^{(a,b)}}
\newcommand\wab{W\ab}
\newcommand{\cw}{\cW}
\newcommand{\cws}{\cW_{\mathrm{sym}}}
\newcommand\on[1]{\norm{#1}_{L^1}}
\newcommand\sn[1]{\norm{#1}_{\infty}}
\newcommand\ta{{(\tau)}}
\newcommand\hka{\widehat\kappa}
\newcommand\hhka{\widehat\kappa'}
\newcommand\tka{\widetilde\kappa}
\newcommand\hmu{\widehat\mu}
\newcommand\tG{{\widetilde G}}
\newcommand\tA{{\widetilde A}}
\newcommand\iiss{\int_{\sss^2}}
\newcommand\tf{{\tilde f}}
\newcommand\tg{{\tilde g}}
\begin{document}
\title{Duality in inhomogeneous random graphs, and the cut metric}

\author{Svante Janson%
\thanks{Department of Mathematics, Uppsala University,
 PO Box 480, SE-751 06 Uppsala, Sweden}
\and Oliver Riordan%
\thanks{Mathematical Institute, University of Oxford, 24--29 St Giles', Oxford OX1 3LB, UK}}
\date{May 1, 2009} 


\maketitle

\begin{abstract}

The classical random graph model $G(n,\lambda/n)$
satisfies a `duality principle', in that removing the giant component
from a supercritical instance of the model leaves (essentially) a
subcritical instance. Such principles have been proved for various
models; they are useful since it is often much easier to study
the subcritical model than to directly study small components
in the supercritical model. Here we prove a duality principle of this
type for a very general class of random graphs with independence
between the edges, defined by convergence of the matrices
of edge probabilities in the cut metric.

\end{abstract}

\section{Introduction and results}

Throughout, a matrix denoted $A_n$ is assumed to be symmetric, $n$-by-$n$, and
to have non-negative entries.
Given such a matrix $A_n=(a_{ij})$,
let $G(A_n)$ denote the random graph on $[n]=\{1,2,\ldots,n\}$ in which edges
are present independently and the probability that $ij$ is an edge is $\min\{a_{ij}/n,1\}$.
If $A_n$ is itself random, then $G(A_n)$ denotes the random graph whose conditional
distribution, given $A_n$, is as above.
As shown by Bollob\'as, Janson and Riordan~\cite{cutsub}, if the matrices
$A_n$ converge (in probability) in a certain sense defined below,
then the random graph `model' $G(A_n)$ may be seen as a generalization
of many earlier inhomogeneous models, such as that introduced in~\cite{kernels}.
Furthermore, results for $G(A_n)$ generalize corresponding 
results for percolation on sequences of dense finite graphs
of the type proved by Bollob\'as, Borgs, Chayes and Riordan~\cite{QRperc}. 

It is well known that in the classical random graph $G(n,p)$, $p=\gl/n$,
the small components of the supercritical
graph behave like a subcritical instance of the same model;
this fact was first exploited by Bollob\'as~\cite{BB84}.
It was also used by {\L}uczak~\cite{Luczak},
who stated it explicitly
as the `symmetry rule'; see also~\cite{JLR}.
It is also sometimes known as a (discrete) `duality principle'; see, for example,
Alon and Spencer~\cite{AS}.
Corresponding results have been proved for several other models,
for example by \citet{MRsize} for the configuration model of
Bollob\'as \cite{B1},
and by \citet{kernels} for their inhomogeneous model.
Our aim in this note is to prove such a result for the very
general model $G(A_n)$ described above.

First we need a few definitions, mainly from~\cite{cutsub}, although many of the
important concepts are from earlier papers.
Let $(\sss,\mu)$ be a measure space with $0<\mu(\sss)<\infty$. 
Almost all the time, $\mu$ will be a probability measure; in fact, most of the time we shall
take $\sss$ to be $[0,1]$ (or $(0,1]$) with $\mu$ Lebesgue measure.
A {\em kernel} on $\sss$ is
an integrable, symmetric function $\ka:\sss^2\to \rplus$. 
Adapting a definition of Frieze and Kannan~\cite{FKquick}, for $W\in L^1(\sss^2)$ we define
the {\em cut norm} $\cn{W}$ of $W$ by
\begin{equation}\label{cntdef}
 \cn W \= \sup_{\sn{f},\sn{g}\le1}
  \Bigabs{\int_{\sss^2} f(x)W(x,y)g(y)\dd\mu(x)\dd\mu(y)}.
\end{equation}
(This is equivalent within a factor $4$ to the variant where $f$ and $g$ are $0/1$-valued functions.)
A {\em rearrangement} of the kernel $\ka$ is any kernel $\ka^\ta$ defined by
\begin{equation}\label{kata} 
 \ka^\ta(x,y) = \ka(\tau(x),\tau(y)),
\end{equation}
where $\tau:\sss\to\sss$ is a measure-preserving bijection.
Given two kernels $\ka$, $\ka'$ on $[0,1]$,
the {\em cut metric} of
Borgs, Chayes, Lov\'asz, S\'os and Vesztergombi~\cite{BCLSV:1}
may be defined by 
\begin{equation}\label{cutdef}
 \dcut(\ka_1,\ka_2) = \inf_\tau \cn{\ka_1-\ka_2^\ta},
\end{equation}
where the infimum is over all rearrangements of $\ka_2$. (Of course, it makes no
difference if we rearrange $\ka_1$ instead, or both $\ka_1$ and $\ka_2$.)

Probabilistically, it is more natural to define $\dcut$ via couplings, as
discussed in~\cite{BCLSV:1}; see also~\cite{BRmetrics}.
Given two measure spaces $(\sss_1,\mu_1)$, $(\sss_2,\mu_2)$ with $0<\mu_1(\sss_1)=\mu_2(\sss_2)<\infty$,
a {\em coupling} of these spaces is simply a measure space $(\sss,\mu)$ together with measure preserving
maps $\sigma_i:\sss\to\sss_i$, $i=1,2$. Given kernels $\ka_i$ on $\sss_i$,
the corresponding {\em pull-backs} $\ka_i^{(\sigma_i)}$ are the kernels on $(\sss,\mu)$ defined by
\[
 \ka_i^{(\sigma_i)}(x,y) = \ka_i(\sigma_i(x),\sigma_i(y)),
\]
and the cut metric may be defined by
\[
 \dcut(\ka_1,\ka_2) = \inf \cn{\ka_1^{(\sigma_1)} - \ka_2^{(\sigma_2)}},
\]
where the infimum is taken over all couplings.
It is not obvious that this definition agrees with \eqref{cutdef} for kernels on $[0,1]$, but this
turns out to be the case, as shown in~\cite{BCLSV:1}.

Although the coupling definition is perhaps more natural (and is forced
on us if we consider probability spaces with atoms), the rearrangement definition
seems intuitively simpler, and is often notationally simpler. Where possible, we shall
work with rearrangements rather than couplings. However, we shall still need to consider
kernels on different spaces. In this setting
a {\em rearrangement} of a kernel $\ka$ on $(\sss_1,\mu_1)$ is any kernel $\ka^\ta$ 
on $(\sss_2,\mu_2)$, where
$\tau:\sss_2\to\sss_1$
is a measure preserving bijection
and $\ka^\ta$ is defined by \eqref{kata} as before. In fact, for technical reasons
it is convenient to allow $\tau$ to be a measure-preserving bijection between 
$\sss_2\setminus N_2$ and $\sss_1\setminus N_1$, where the $N_i$ are
null sets: $\mu_i(N_i)=0$.

Given a symmetric $n$-by-$n$ matrix $A_n$, there is a piecewise constant 
kernel $\ka_{A_n}$ on $[0,1]$ naturally associated to $A_n$,
taking the value $a_{ij}$ on the square $((i-1)/n,i/n]\times ((j-1)/n,j/n]$.
(When working with couplings, one can simply view $A_n$ itself as a kernel
on a finite space with $n$ points.)
We often identify $A_n$ and $\ka_{A_n}$, writing, for example, $\dcut(A_n,\ka)$
for $\dcut(\ka_{A_n},\ka)$. Throughout we consider the following random graph
`model': we have a kernel $\ka$ on $[0,1]$ and a sequence $A_n$ of (deterministic
or random) matrices with $\dcut(A_n,\ka)\pto 0$, and study $G_n=G(A_n)$.
We shall show that deleting the giant component from such a graph $G_n$,
when it exists, leaves another instance of the same model. To make sense
of this requires some further definitions.

Let $\Tk$ denote the integral operator associated to $\ka$, defined
by $(\Tk f)(x)=\int_\sss \ka(x,y)f(y)\dd\mu(y)$.

Given a kernel $\ka$ on a type space $(\sss,\mu)$,
where $\mu$ is a probability measure, let $\bpk$ be the
Poisson Galton--Watson branching process
naturally associated to $\ka$: we start with a single particle whose type is distributed
according to $\mu$, particles have children independently of each other and
of the history, and the types of the children of a particle of type
$x$ form a Poisson process on $\sss$ with intensity $\ka(x,y)\dd\mu(y)$.
We write $\bpk(x)$ for the same process started with a single particle of type $x$.

As in~\cite{kernels}, let $\rho(\ka)$ denote the survival probability of $\bpk$
and $\rho(\ka;x)$ that of $\bpk(x)$.
Also, let $\rho_k(\ka;x)$ and $\rho_k(\ka)$ denote respectively the probabilities
that $\bpk(x)$ or $\bpk$ consists of exactly $k$ particles in total.

We now turn to the `dual' of a kernel $\ka$ on
a probability space $(\sss,\mu)$, 
giving two versions with slightly different normalization.
First, let $\hka$ be the kernel that is equal to $\ka$ as a function, but defined
on the space $(\sss,\hmu)$, where $\hmu$ is the measure
defined by 
\begin{equation}\label{hmudef}
 \dd\hmu(x)=(1-\rho(\ka;x))\dd\mu(x).
\end{equation}
Note that $\hmu(\sss)=1-\rho(\ka)$.
Second, to return to a probability space, let $\hmu'$ be the normalized
measure $\hmu/(1-\rho(\ka))$, and let $\hhka$ be the kernel on $(\sss,\hmu')$
equal to $\ka$ as a function. Finally,
let $\tka=(1-\rho(\ka))\hhka$ be the kernel on $(\sss,\hmu')$
given by $\tka(x,y)=(1-\rho(\ka))\ka(x,y)$.
The kernels $\hka$ and $\tka$ are equivalent in a certain natural sense;
for example, the operators $T_{\hka}$ and $T_{\tka}$ coincide.

Finally, the kernel $\ka$ is {\em reducible} if there is some $A\subset \sss$ with $0<\mu(A)<1$
such that $\ka$ is zero a.e. on $A\times A^\cc$, and {\em irreducible} otherwise.

We write $\cC_i(G)$ for the $i$th largest component of a graph;
for definiteness, if there is a tie, we order components 
of equal sizes according to any fixed ordering on the subsets of $[n]$.
Let $\tG$ denote the graph formed from $G$ by deleting $\cC_1(G)$.
Recall from~\cite{cutsub}
that if $\dcut(A_n,\ka)\pto 0$ and $\ka$ is irreducible, then 
\begin{equation}\label{gc}
 |\cC_1(G_n)|/n \pto \rho(\ka)
\end{equation}
and
\begin{equation}\label{2nd}
 |\cC_2(G_n)|/n \pto 0,
\end{equation}
where $G_n=G(A_n)$. Recall also from~\cite{kernels} that $\rho(\ka)>0$ if and only
if $\norm{\Tk}>1$.

Given a (symmetric, $n$-by-$n$, non-negative, as always)
matrix $A_n$, let $\tA_n$ denote the
random $|\tG|$-by-$|\tG|$ sub-matrix of $A_n$ corresponding to $\tG$, where $G=G(A_n)$.
More precisely, $\tA_n$ may be defined ordering the vertices of $\tG$
arbitrarily, and setting $\tA_{ij}=a_{vw}$
where $v$ and $w$ are the $i$th and $j$th vertices of $\tG$.

Our aim in this paper is to prove the following `duality' result.

\begin{theorem}\label{th1}
Let $(A_n)$ be a
(random or deterministic) 
sequence of symmetric, non-negative matrices
with $\dcut(A_n,\ka)\pto 0$ for some irreducible kernel $\ka$
on $[0,1]$.
Then $\dcut(\tA_n,\hhka)\pto 0$.
\end{theorem}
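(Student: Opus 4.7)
\emph{Reductions.} If $\rho(\ka)=0$, there is no giant component, $|\cC_1(G_n)|=\op(n)$, and $\hhka=\ka$, so the conclusion is immediate from $\dcut(A_n,\ka)\pto 0$. Assume $\rho(\ka)>0$. By choosing, for each $n$, a rearrangement $\tau_n$ of $\ka$ that nearly realizes $\dcut(A_n,\ka)$ and relabeling the vertices of $A_n$ accordingly (which leaves $\tA_n$ invariant in distribution), I reduce to the case $\cn{\ka_{A_n}-\ka}\pto 0$, with both regarded as kernels on $(\oi,\mathrm{Leb})$. Write $x_i=(i-\tfrac12)/n$, $\eta_i=\ett{i\notin\cC_1(G_n)}$, and $\chi_n(x,y)=\eta_{\lceil nx\rceil}\eta_{\lceil ny\rceil}$, so that $\chi_n$ is the indicator of the ``surviving square'' $S_n\times S_n\subset\oi^2$.

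\emph{Main analytic step.} The heart of the proof is the following cut-norm convergence in probability on $\oi^2$:
\begin{equation*}
\cn{\,\chi_n(x,y)\,\ka(x,y)\;-\;(1-\rho(\ka;x))(1-\rho(\ka;y))\,\ka(x,y)\,}\pto 0. \tag{$\ast$}
\end{equation*}
I would prove $(\ast)$ in two steps. First, using the exploration/branching-process techniques behind \eqref{gc} as developed in~\cite{kernels} and~\cite{cutsub}, establish the expectation version: for any $f,g:\oi\to\RR$ with $|f|,|g|\le1$,
\[
\E\int\!\!\int f(x)g(y)\chi_n(x,y)\ka(x,y)\dd x\dd y \;\to\; \int\!\!\int f(x)g(y)(1-\rho(\ka;x))(1-\rho(\ka;y))\ka(x,y)\dd x\dd y,
\]
uniformly in $f,g$. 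This rests on two inputs: a vertex-level refinement $\Pr(i\in\cC_1(G_n))\to\rho(\ka;x_i)$ of~\eqref{gc}, and the asymptotic factorization $\Pr(i,j\in\cC_1)\approx\Pr(i\in\cC_1)\Pr(j\in\cC_1)$ for typical pairs $i,j$. Second, a bounded-differences concentration argument along the edges (changing a single edge moves the size/composition of $\cC_1$ by $\op(n)$) promotes the expectation to convergence in probability.

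\emph{From $(\ast)$ to the cut metric.} Take the coupling space $\Omega=(\oi,\mathrm{Leb})$ with $\sigma_1=\mathrm{id}$ for $\tA_n$, and let $\sigma_2:\Omega\to(\oi,\hmu')$ be a measure-preserving map constructed so that the $j$-th interval of $\Omega$ of length $1/|S_n|$ is sent to a small neighborhood of the type $x_{v_j}$ of the $j$-th surviving vertex $v_j$ (the minor distortion needed to make $\sigma_2$ truly $\hmu'$-preserving rather than $\nu_n$-preserving, where $\nu_n$ is the empirical measure of surviving types, contributes only an $o(1)$ error once the measures are close). A direct change-of-variables then expresses $\cn{\ka_{\tA_n}^{(\sigma_1)}-\hhka^{(\sigma_2)}}$, up to the factor $(n/|S_n|)^2\to(1-\rho(\ka))^{-2}$, as a sum of $\cn{\chi_n\ka-(1-\rho(\ka;\cdot))(1-\rho(\ka;\cdot))\ka}$ and a quantity dominated by $\cn{\ka_{A_n}-\ka}$; both vanish in probability, yielding $\dcut(\tA_n,\hhka)\pto 0$.

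\emph{Main obstacle.} The principal difficulty is the joint asymptotic independence used in $(\ast)$: although $\{i\in\cC_1\}$ is a global event, we need its effective independence from $\{j\in\cC_1\}$ in the averaged (cut-norm) sense. I would attack this via a sprinkling decomposition $A_n=A_n'+A_n''$ with $A_n''$ small but $A_n'$ still supercritical, so that the giant of $G(A_n')$ accounts for almost all of $\cC_1(G_n)$ while $G(A_n'')$ acts as essentially fresh randomness on its complement, enabling a two-vertex exploration with bounded dependence. Combining this with the stability properties of $\dcut$-convergence established in~\cite{cutsub} (which guarantee that such decompositions respect the limit kernel) should suffice.
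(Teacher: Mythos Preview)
Your reduction and your target $(\ast)$ are reasonable, but the proposed proof of $(\ast)$ has two genuine gaps.

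\emph{The vertex-level input is not available.} You invoke ``$\Pr(i\in\cC_1(G_n))\to\rho(\ka;x_i)$'' as a refinement of \eqref{gc}. In the cut-metric framework this is simply false for individual vertices: $\cn{\ka_{A_n}-\ka}\to 0$ does not control individual rows of $A_n$, so any fixed vertex can be (say) isolated in every $G_n$ without affecting the hypothesis. Only integrated statements of the form $\int f(x)\bigl(\Pr(\lceil nx\rceil\in\cC_1)-\rho(\ka;x)\bigr)\dd x\to 0$ for bounded $f$ can hold, and proving even this is essentially the whole problem. The same objection applies, more severely, to the pairwise factorization $\Pr(i,j\in\cC_1)\approx\Pr(i\in\cC_1)\Pr(j\in\cC_1)$: a sprinkling decomposition $A_n=A_n'+A_n''$ respecting cut convergence exists, but you still have no control over what the two vertex-explorations from $i$ and $j$ see locally.

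\emph{The concentration step does not go through.} Changing a single edge can move an entire component in or out of $\cC_1$, so the Lipschitz constant for $\int f g\,\chi_n\ka$ along the edge filtration is of order $|\cC_2(G_n)|/n$. This is $\op(1)$ by \eqref{2nd}, but it is not a deterministic bound, so McDiarmid does not apply; conditioning on $\{|\cC_2|\le\eps n\}$ destroys the independence you need. You would also need concentration uniformly over the supremum in $f,g$, which is a further obstacle.

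The paper avoids both problems by never looking at $\cC_1$ directly. Instead it counts, for each fixed $k$, the $f$-weighted number of vertices in components of size exactly $k$; this reduces to tree subgraph counts, which are shown (\refL{l_FWp}) to be Lipschitz in the cut norm and concentrate by a straightforward second-moment argument, since small components are local objects. Summing over $k$ and using \eqref{2nd} yields a one-variable law of large numbers (\refT{tlf}): $\frac{1}{n}\sum_{v\notin\cC_1} f(v)\to\int f(x)(1-\rho(\ka;x))\dd\mu(x)$. This is then applied only with $f$ the indicator of each cell $A_i$ of a finite-type approximation $\ka_f$ to $\ka$, giving $\nu_n(A_i)\pto\hmu(A_i)$; a short chain of triangle inequalities through $\ka_f$ then yields \eqref{aim}. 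The finite-type step is what replaces your two-variable statement $(\ast)$.
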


The main significance is the following consequence.

\begin{theorem}\label{tc1}
Let $(A_n)$ be a sequence of symmetric, non-negative matrices
with $\dcut(A_n,\ka)\pto 0$ for some irreducible kernel $\ka$
with $\rho(\ka)>0$, 
and let $G_n=G(A_n)$.
Then there is a random sequence $(B_n)$ of matrices
such that $\tG_n$ and $G(B_n)$ may be coupled to agree whp,
with $B_n$ $m(n)$-by-$m(n)$, $m(n)/n\pto 1-\rho(\ka)$, and
$\dcut(B_n,\tka)\pto 0$, 
\end{theorem}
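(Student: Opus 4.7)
The plan is to set $B_n\=(m(n)/n)\,\tA_n$, where $m(n)\=|V(\tG_n)|=n-|\cC_1(G_n)|$. Then $B_n$ is automatically a random symmetric non-negative $m(n)\times m(n)$ matrix, and the size condition $m(n)/n\pto 1-\rho(\ka)$ is immediate from \eqref{gc}. This definition is forced if we want the edge probabilities of $G(B_n)$ to match those $\tG_n$ inherits from $G_n$: indeed $b_{ij}/m(n)=a_{v_iv_j}/n$.

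For the cut-metric convergence, I would write $\tka=(1-\rho(\ka))\hhka$ and apply the triangle inequality together with the homogeneity $\dcut(\alpha W,\alpha W')=\alpha\,\dcut(W,W')$ to obtain
\begin{equation*}
 \dcut(B_n,\tka)\le\tfrac{m(n)}{n}\,\dcut(\tA_n,\hhka)+\Bigabs{\tfrac{m(n)}{n}-(1-\rho(\ka))}\,\cn{\hhka}.
\end{equation*}
The first term tends to $0$ in probability by \refT{th1}, and the second does so because $m(n)/n\pto 1-\rho(\ka)$ and $\cn{\hhka}\le\on{\hhka}<\infty$ (kernels are integrable by definition).

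For the coupling, let $V'\=V(\tG_n)$, and observe that conditional on $A_n$ and $V'$ the edges of $\tG_n$ are independent Bernoullis with the prescribed probabilities \emph{conditioned on} the event $E$ that no component of the resulting subgraph on $V'$ has size at least $|\cC_1(G_n)|=n-|V'|$ (with the usual tie-breaking), whereas $G(B_n)$ by definition has the un-conditioned product distribution. Hence
\begin{equation*}
 d_{\mathrm{TV}}\bigpar{\tG_n,\,G(B_n)\mid A_n,V'}=1-\Pr_{\mathrm{iid}}\bigpar{E\mid A_n,V'},
\end{equation*}
and a maximal coupling reduces the theorem to showing $\Pr_{\mathrm{iid}}(E\mid A_n,V')\pto 1$.

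Under the iid distribution the subgraph on $V'$ is distributed as $G(B_n)$, and by \eqref{gc}, $|\cC_1(G_n)|=(\rho(\ka)+\op(1))n$ with $\rho(\ka)>0$; it therefore suffices to show $|\cC_1(G(B_n))|/n\pto 0$. Granted $\dcut(B_n,\tka)\pto 0$, this is the content of the giant-component theorem of~\cite{cutsub} (or the sub-critical component-size bound of~\cite{kernels}) applied to $\tka$, \emph{provided} $\tka$ is subcritical in the sense that $\rho(\tka)=0$. Since the operators $T_{\tka}$ and $T_{\hka}$ coincide, as noted in the excerpt, and $\|T_{\hka}\|\le 1$ is a classical property of the dual operator (see \cite{kernels}), we do have $\rho(\tka)=0$. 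The main obstacle is thus marshalling this subcriticality in exactly the form feeding the component-size theorem; modulo that, the proof is a triangle-inequality manipulation plus a standard total-variation coupling.
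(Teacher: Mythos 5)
Your proposal is correct, and its skeleton (taking $B_n=\frac{m(n)}{n}\tA_n$, getting $\dcut(B_n,\tka)\pto0$ from \refT{th1}, observing that conditionally on $V(\cC_1(G_n))$ the graph $\tG_n$ is exactly $G(B_n)$ conditioned on having no component beating $\cC_1(G_n)$, and then reducing to showing that this conditioning event has probability tending to $1$) is the same as the paper's. Where you genuinely diverge is in the last step. The paper argues by contradiction using only properties of the original pair $(A_n,\ka)$: if $G(B_n)$ had non-vanishing probability of containing a component of size at least $|\cC_1(G_n)|$, then $G_n$ would with positive probability contain two components of order $\Theta(n)$, contradicting the uniqueness statement \eqref{2nd}; the details are delegated to \cite[page 79]{kernels}. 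You instead apply the law of large numbers \eqref{gc} directly to the dual pair $(B_n,\tka)$, concluding $|\cC_1(G(B_n))|/n\pto0$ from subcriticality of the dual. This is a legitimate and arguably more self-contained route once \refT{th1} is in hand, but it needs two verifications that the paper's route avoids and that you should supply: (i) the subcriticality itself, which does hold --- $T_{\tka}=T_{\hka}$, $\norm{T_{\hka}}\le1$ by \cite[Theorem 6.7]{kernels} (equivalently, the branching process of $\tka$ is that of $\ka$ conditioned on extinction), and then $\rho(\tka)=0$ by the criterion $\rho>0\iff\norm{T}>1$ quoted in the paper; and (ii) irreducibility of $\tka$, which \eqref{gc} as stated requires: this follows because $1-\rho(\ka;x)\ge e^{-\int\ka(x,y)\dd\mu(y)}>0$ a.e., so $\hmu'$ and $\mu$ have the same null sets and irreducibility of $\ka$ transfers (one should also transfer $\tka$ to $[0,1]$ by a measure isomorphism, and note that \eqref{gc} applies to the random sequence $(B_n)$, both routine). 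The trade-off: the paper's argument uses no property of the dual kernel beyond \refT{th1}, at the cost of a sketched contradiction argument resting on \cite{kernels}; yours is direct and quantifies the conditioning event via the dual model, at the cost of importing the branching-process duality facts above. Your triangle-inequality derivation of $\dcut(B_n,\tka)\pto0$ and the total-variation/maximal-coupling step (with $d_{\mathrm{TV}}=1-\Pr(E)$ conditionally, glued over the conditioning) are fine and merely spell out what the paper leaves implicit.
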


\begin{proof}
Conditioning on the $A_n$,
we may assume without loss of generality that the $A_n$ are deterministic, with
$\dcut(A_n,\ka)\to 0$.

The result is essentially immediate from \refT{th1} and the uniqueness of the giant 
component in $G_n$. Indeed, we simply take $B_n=\frac{m(n)}{n}\tA_n$.
Note that $m(n)=n-|\cC_1(G_n)|$ satisfies $m(n)/n\pto 1-\rho(\ka)$
by \eqref{gc}. Since $\dcut(\tA_n,\hhka)\pto 0$ by \refT{th1}, this implies
$\dcut(B_n,\tka)\pto 0$.
Note that $B_n$ depends on $G_n$, but only via the vertex set of $\cC_1(G_n)$.
Conditioning on this vertex set, we see that the distribution
of $\tG_n$ is exactly that of $G(B_n)$ conditioned on containing
no component larger than $\cC_1(G_n)$ (or of the same size but earlier
in our fixed order). However, the unconditional
probability of $G(B_n)$ containing such a component tends to 0,
as otherwise $G_n$ would have positive probability of containing two
components 
of order $\Theta(n)$, contradicting~\eqref{2nd}. For full details of a related 
argument see~\cite[page 79]{kernels}.
\end{proof}

In turn, \refT{tc1} implies, for example, that the number of edges in
the giant component of $G(A_n)$ is `what one would expect',
i.e., that Theorem 3.5 of~\cite{kernels} extends to this more general setting.

\begin{corollary}\label{c2}
Let $\ka$ be an irreducible kernel, and let $G_n=G(A_n)$, where $\dcut(A_n,\ka)\pto 0$.
Then
\[
 \frac{1}{n} e(\cC_1(G_n))\pto \zeta(\ka),
\]
where
\[
  \zeta(\ka) \=
 \frac12
 \int_{\sss^2}\ka(x,y)\bigpar{\rho(\ka;x)+\rho(\ka;y)-\rho(\ka;x)\rho(\ka;y)}
 \dd\mu(x)\dd\mu(y).
\]
\end{corollary}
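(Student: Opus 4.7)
The plan is to write $e(\cC_1(G_n))=e(G_n)-e(\tG_n)$---valid because $\cC_1(G_n)$ is a connected component of $G_n$ and so has no edges to $\tG_n$---and to evaluate each piece via an auxiliary edge-density lemma.

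The lemma I would establish is that for any integrable kernel $\ka'$ on a probability space $(\sss',\mu')$ and any (random or deterministic) symmetric non-negative matrices $A_n'$ with $\dcut(A_n',\ka')\pto 0$,
\[
 \frac{1}{n}e(G(A_n'))\pto\frac{1}{2}\int_{\sss'\times\sss'}\ka'(x,y)\,d\mu'(x)\,d\mu'(y).
\]
To prove it, condition on $A_n'$ to reduce to the deterministic case. Edge independence gives $\Var e(G(A_n'))\le\E e(G(A_n'))=O(n)$, so Chebyshev reduces the claim to convergence of means. Taking $f=g=1$ in \eqref{cntdef}, and using invariance of $\int_{(\sss')^2}$ under rearrangements, yields $\int_{(\sss')^2}\ka_{A_n'}\,d\mu'\,d\mu'\to\int_{(\sss')^2}\ka'\,d\mu'\,d\mu'$. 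The expected edge count $\tfrac12\sum_{i\ne j}\min(a'_{ij}/n,1)$ differs from $\tfrac{n}{2}\int_{(\sss')^2}\ka_{A_n'}\,d\mu'\,d\mu'$ only by a diagonal term and by a truncation error from entries with $a'_{ij}>n$; both are controlled by a uniform-integrability estimate, using the pointwise bound $w\,\ett{w\ge 2M}\le 2(w-w\wedge M)$ together with $\int_{(\sss')^2}(\ka'-\ka'\wedge M)\,d\mu'\,d\mu'\to 0$ as $M\to\infty$.

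Granting the lemma, apply it directly to $A_n$ to get $e(G_n)/n\pto\tfrac12\int_{\sss^2}\ka\,d\mu\,d\mu$. Combining the lemma with \refT{tc1}---which furnishes $B_n$ of size $m(n)$ with $m(n)/n\pto 1-\rho(\ka)$, $\dcut(B_n,\tka)\pto 0$, and $\tG_n\isom G(B_n)$ whp---and applying it to $B_n$ on the probability space $(\sss,\hmu')$ gives
\[
 \frac{e(\tG_n)}{n}=\frac{m(n)}{n}\cdot\frac{e(G(B_n))}{m(n)}\pto (1-\rho(\ka))\cdot\frac12\int_{\sss^2}\tka\,d\hmu'\,d\hmu'.
\]
Substituting $\tka=(1-\rho(\ka))\ka$ and $d\hmu'(x)=(1-\rho(\ka;x))/(1-\rho(\ka))\,d\mu(x)$ simplifies the right-hand side to
\[
 \frac12\int_{\sss^2}\ka(x,y)(1-\rho(\ka;x))(1-\rho(\ka;y))\,d\mu(x)\,d\mu(y).
\]
Subtracting from $\tfrac12\int_{\sss^2}\ka\,d\mu\,d\mu$ and using the identity $1-(1-a)(1-b)=a+b-ab$ yields exactly $\zeta(\ka)$.

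The main obstacle is the auxiliary lemma in the unbounded-$\ka'$ regime: cut-metric convergence does not directly imply $L^1$ convergence of $\ka_{A_n'}$ to $\ka'$, so the uniform-integrability estimate needed to handle entries comparable to or exceeding $n$ (and the diagonal contribution) requires a careful truncation argument rather than being a one-line consequence of $\dcut(A_n',\ka')\pto 0$. Once the lemma is in place, the rest of the proof is a routine computation combined with \refT{tc1}.
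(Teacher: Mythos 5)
Your overall route is the same as the paper's: write $e(\cC_1(G_n))=e(G_n)-e(\tG_n)$, prove an edge-density statement for cut-convergent matrix sequences via mean (taking $f=g=1$ in \eqref{cntdef}) and variance (variance at most mean), and apply it once to $A_n$ with limit $\ka$ and once, via \refT{tc1}, to $B_n$ with limit $\tka$ on $(\sss,\hmu')$; the final algebra is correct. The genuine gap is in the step you flag as the main obstacle, and your proposed fix for it does not work as stated. You want to control the error coming from diagonal entries and from entries with $a_{ij}>n$ by the pointwise bound $w\,\ett{w\ge 2M}\le 2(w-w\wedge M)$ together with $\int(\ka'-\ka'\wedge M)\,d\mu'\,d\mu'\to 0$ as $M\to\infty$. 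But the latter is a property of the limit kernel only, and cut-metric convergence does not transfer uniform integrability at a fixed level $M$ to the matrices: the truncation $W\mapsto W\wedge M$ is not continuous in the cut norm, since superlevel sets are not product sets. Concretely, take $a_{ij}=(n/\log n)\ett{ij\in E(H_n)}$ with $H_n=G(n,\log n/n)$; then whp $\cn{\ka_{A_n}-1}\to 0$ (sparse quasirandomness: the maximal deviation $\max_{S,T}|e_{H_n}(S,T)-p|S||T||$ is $O(n\sqrt{\log n})=o(pn^2)$), so $\dcut(A_n,\ka)\to 0$ with $\ka$ the constant kernel $1$, yet for every fixed $M$ one has $\int(\ka_{A_n}-\ka_{A_n}\wedge M)\to 1$, not $0$: all of the $L^1$ mass of $\ka_{A_n}$ sits at a height tending to infinity. (In this example the edge-density conclusion still holds, because the entries are $o(n)$ and so cause no discrepancy between $a_{ij}/n$ and $\min(a_{ij}/n,1)$; the example defeats your intermediate uniform-integrability claim, not the lemma itself.)

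What is actually needed, and what the paper invokes at this point, is \cite[Lemma 2.1]{cutsub}: if $\dcut(A_n,\ka)\to 0$ with $\ka$ integrable, then there exists $M(n)$ with $M(n)/n\to 0$ such that the sum of the entries of $A_n$ exceeding $M(n)$ is $o(n^2)$. The level must be allowed to grow with $n$; your write-up works with a fixed $M$ sent to infinity afterwards, which is exactly the regime in which the transfer fails. Granting that lemma, one zeroes out the entries above $M(n)$ and the diagonal entries (the retained diagonal contributes at most $nM(n)=o(n^2)$), changing the kernel by $o(1)$ in $L^1$ and hence in cut norm, and changing the graph by $\op(n)$ edges in the natural coupling; after that your Chebyshev argument and the computation via \refT{tc1} go through and coincide with the paper's proof. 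So the proposal is structurally the paper's argument, but the truncation step needs \cite[Lemma 2.1]{cutsub} (or an equivalent argument exploiting cut-norm control on product sets), not the uniform integrability of the limit kernel alone.
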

\begin{proof}
As usual, we condition on the $A_n$ assuming that $\dcut(A_n,\ka)\to 0$.
Next we eliminate `large' entries (in particular those exceeding $n$),
as well as any diagonal entries.

If $\dcut(A_n,\ka)\to 0$, then, as shown in~\cite[Lemma 2.1]{cutsub},
there is some $M(n)$ with $M(n)/n\to 0$ such that the sum of the entries
of $A_n$ exceeding $M(n)$ is $o(n^2)$. Define $A_n'$ by setting
all such entries, and all diagonal entries, to 0. Noting
that the sum of the diagonal entries of $A_n$ not exceeding 
$M(n)$ is at most $nM(n)=o(n^2)$, we have
$\dcut(A_n,A_n')\le \on{\ka_{A_n}-\ka_{A_n'}}=o(1)$, and in the natural coupling $G(A_n)$
and $G(A_n')$ agree in all but $\op(n)$ edges.
The expected number of edges
in $G(A_n')$ is simply $n/2$ times $\int \ka_{A_n'}$.
Since the actual number is a sum of independent indicator variables, its
variance is at most its mean, and hence $O(n)$. Thus
\[
 n^{-1} e(G(A_n)) = n^{-1} e(G(A_n')) + \op(1) = \frac{1}{2} \iiss \ka_{A_n'} +\op(1) = 
 \frac{1}{2}\iiss \ka +\op(1).
\]
Applying this result to $\tG_n$, which agrees
whp with $G(B_n)$, we see that
\begin{align*}
 \frac{1}{n} e(\tG_n) &= \frac{|\tG_n|}{2n} \iiss \tka(x,y)\dd\hmu'(x)\dd\hmu'(y) +\op(1) \\
 &= (1-\rho(\ka))\frac{1}{2}\iiss (1-\rho(\ka))\hhka(x,y)\dd\hmu'(x)\dd\hmu'(y) +\op(1) \\
 &= \frac{1}{2} \iiss \hka(x,y)\dd\hmu(x)\dd\hmu(y) +\op(1) \\
 &= \frac{1}{2} \iiss \ka(x,y)(1-\rho(\ka;x))(1-\rho(\ka;y)) \dd\mu(x)\dd\mu(y) +\op(1).
\end{align*}
Subtracting from $e(G_n)$ gives the result.
\end{proof}

\refT{tc1} has more substantial applications, allowing other quantities associated
to the small components of a suitable random graph $G_n$ to be studied
in a simple way.
For one example, concerning susceptibility, see~\cite{suscep}.
For another, consider Theorem 3 in~\cite{QRperc}. Translated to the present 
notation, this result concerns the graphs $G_n=G(A_n)$, where
the matrices $A_n$ have uniformly bounded entries and $\dcut(A_n,\ka)\to 0$.
It makes two statements: (a) when $\norm{\Tk}<1$ then
$|\cC_1(G_n)|\le B\log n$ holds whp for some constant $B$
(depending on $\ka$ and the bound on the entries of the $A_n$)
and (b) when $\norm{\Tk}>1$ and $\ka$ is irreducible,
then $|\cC_2(G_n)|\le B'\log n$ whp for some $B'$. The proof of part (a)
in~\cite{suscep} is very simple, that of part (b) rather lengthy.
Using~\refT{tc1} it is easy to deduce part (b) from part (a);
one only needs the simple fact that in this setting,
since $\ka$ is bounded and hence $\Tk$ is Hilbert--Schmidt, the dual
kernel is strictly subcritical; see~\cite[Theorem 6.7]{kernels}.

\begin{remark}
Theorems \ref{th1} and \ref{tc1} extend {\em mutatis mutandis} to the
graphs $G(H_n)$ studied in~\cite[Section 3]{cutsub}, which
may be seen as the simple graphs underlying random
(non-uniform) hypergraphs whose `hypermatrices' of edge probabilities
converge in a suitable sense to a `hyperkernel', i.e.,
a sequence of symmetric functions $\ka_r$ on $\sss^r$, $r=2,3,\ldots$.
Since the changes needed are very simple, but complicate the notation, 
we do not give the details. Note that for the analogue of \refC{c2},
one needs an additional condition, called `edge integrability'
in~\cite[Remark 3.5]{cutsub}, as well as convergence in the corresponding
version of the cut metric.
\end{remark}

\section{Proofs}

The main idea is to prove an analogue 
of~\cite[Theorem 9.10]{kernels}.
The statement, \refT{tlf} below, is a little awkward, as we are trying to formulate a result about
the `type' of a vertex in a setting where individual vertices don't really have types.

We start with a much simpler statement concerning branching processes.
As in~\cite{cutsub} we write $\cw$ for the set of all integrable non-negative
functions $W:\sss\times\sss\to\ooo$, and
$\cws$ for the subset of symmetric functions, i.e., kernels.
For $W\in \cw$, we write
$\glw$ and $\glw'$ for the marginals of $W$ with respect to the first
and second 
variables:
\[
 \glw(x) \= \int W(x,y)\dd\mu(y), \qquad \glw'(y) \= \int W(x,y)\dd\mu(x).
\]
Of course, for $W\in\cws$ we have $\glw=\glw'$.

Given a finite graph $F$ with vertex set \set{1,\dots,r},
integrable functions $f_i:\sss\to\RR$, and $W\in \cws$, let
\begin{equation}\label{txf}
  \txx(F,(f_i),W)
\=
 \int_{\sss^r} \prod_{ij\in E(F)} W(x_i,x_j) 
 \prod_{k=1}^{r} f_k(x_k) e^{-\glw(x_k)}
 \dd \mu(x_1)\dotsm \dd \mu(x_r).
\end{equation}
Note that this differs from the quantity $\tx(F,W)$ considered
in~\cite{cutsub} 
by the inclusion of the factors $f_k(x_k)$, $k=1,\ldots,r$.

\begin{lemma}\label{l_FWx}
Let $F$ be a tree and $f_1,\ldots,f_{|F|}$ bounded functions on $\sss$.
Then $W\mapsto \txx(F,(f_k),W)$ is a bounded map on $\cws$ that is
Lipschitz continuous in the cut norm. More specifically, there exists a
constant $C$ (depending on $F$ only) such that 
$|\txx(F,(f_k),W)|\le C\prod_k\sn{f_k}$ for all
$W\in\cws$, and $|\txx(F,(f_k),W)-\txx(F,(f_k),W')|\le C \cn{W-W'}\prod_k\sn{f_k}$ for all
$W,W'\in\cws$.
\end{lemma}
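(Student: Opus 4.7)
The plan is to prove the boundedness assertion first and then derive the Lipschitz estimate from it by linear interpolation, exploiting the tree structure of $F$ throughout.

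For boundedness, I would root $F$ at an arbitrary vertex $\rho$ and, for each vertex $k$ with children $c_1,\ldots,c_d$ in this rooted tree, define $\phi_k$ recursively by
$$\phi_k(x_k)=f_k(x_k)e^{-\glw(x_k)}\prod_{i=1}^{d}\int_\sss W(x_k,x_{c_i})\phi_{c_i}(x_{c_i})\dd\mu(x_{c_i}),$$
with $\phi_k(x_k)=f_k(x_k)e^{-\glw(x_k)}$ at leaves. Fubini then gives $\txx(F,(f_k),W)=\int_\sss\phi_\rho(x_\rho)\dd\mu(x_\rho)$, so it suffices to bound $\sn{\phi_\rho}$. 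An induction on the depth of $k$, using the pointwise estimate $t^d e^{-t}\le (d/e)^d$ to absorb the factor $\glw(x_k)^d$ produced by the $d$ inner integrals, yields $\sn{\phi_k}\le C_k\prod_{k'\in V(F_k)}\sn{f_{k'}}$ for a constant depending only on the shape of the subtree $F_k$ rooted at $k$. Setting $k=\rho$ gives the first assertion.

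For Lipschitz continuity, set $W_t=(1-t)W'+tW\in\cws$ for $t\in[0,1]$ and write
$$\txx(F,(f_k),W)-\txx(F,(f_k),W')=\int_0^1\frac{d}{dt}\txx(F,(f_k),W_t)\dt.$$
The derivative is a sum of $|E(F)|+|V(F)|=2r-1$ terms, one for each edge factor and one for each exponential factor. For the contribution of an edge $ij$, $\partial_tW_t(x_i,x_j)=(W-W')(x_i,x_j)$, and removing $ij$ disconnects $F$ into two rooted subtrees $(F_i,i),(F_j,j)$, so Fubini gives
$$\iint(W-W')(x_i,x_j)\,h_i^{(t)}(x_i)\,h_j^{(t)}(x_j)\dd\mu(x_i)\dd\mu(x_j),$$
where $h_i^{(t)},h_j^{(t)}$ are the functions $\phi$ of the boundedness step applied to the two rooted subtrees with $W$ replaced by $W_t$. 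The boundedness step yields $\sn{h_i^{(t)}},\sn{h_j^{(t)}}\le C\prod_k\sn{f_k}$ uniformly in $t$, and the definition of the cut norm bounds this contribution by $C\cn{W-W'}\prod_k\sn{f_k}$. For the exponential contribution at a vertex $k$, $\partial_t e^{-\lambda_{W_t}(x_k)}=-\bigl[\int(W-W')(x_k,y)\dd\mu(y)\bigr]e^{-\lambda_{W_t}(x_k)}$, and rooting $F$ at $k$ the same recursion produces a uniformly bounded function $H_k^{(t)}(x_k)$ such that the contribution equals $-\iint(W-W')(x_k,y)H_k^{(t)}(x_k)\dd\mu(x_k)\dd\mu(y)$, bounded by $\cn{W-W'}\sn{H_k^{(t)}}$ via the constant test function $g\equiv1$ in the cut norm definition. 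Summing the $2r-1$ terms and integrating over $t\in[0,1]$ yields the Lipschitz estimate.

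The one genuinely delicate point is the uniform $L^\infty$-control of the partial integrals $\phi_k$, $h_i^{(t)}$, and $H_k^{(t)}$ in the spatial variable left free after partial integration: without such control there is nothing to pair against $W-W'$ in the cut norm definition. This is precisely where the tree assumption on $F$ is essential, since it guarantees that each non-root vertex contributes its Poisson intensity $\glw(\cdot)$ to exactly one parent, whose exponential factor can then absorb the polynomial growth via $t^d e^{-t}\le(d/e)^d$. Summing over finitely many contributions and integrating in $t$ gives a constant $C$ depending only on $F$.
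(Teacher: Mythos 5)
Your argument is sound in substance but takes a genuinely different, self-contained route from the paper's. The paper does not argue from scratch: it absorbs the weights and the exponential factors into the edge kernels, replacing the kernel on each (oriented) edge $ij$ by $W^{(1/d_i,1/d_j)}_{(ij)}$, which carries the fractional factors $f_i^{1/d_i}$, $f_j^{1/d_j}$ and $e^{-\glw/d_i}$, $e^{-\glw/d_j}$; the products over the $d_k$ edges at each vertex then reconstitute $f_k e^{-\glw}$, so $\txx(F,(f_k),W)$ becomes the plain tree functional $\ty$ of \eqref{ty} evaluated at kernels with uniformly bounded marginals, and the Lipschitz continuity of $W\mapsto\wab$ in the cut norm is quoted from \cite[Lemma 2.4]{cutsub}, after which the argument of \cite[Theorem 2.3]{cutsub} (an edge-by-edge telescoping) is invoked verbatim. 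Your rooted recursion with $t^d e^{-t}\le(d/e)^d$ is exactly the reason those marginals are bounded, and your pairing of the two freed partial integrals (or of one partial integral against $g\equiv1$ for the vertex terms) with $W-W'$ is the same mechanism by which the cut norm enters there; what your route buys is independence from the external machinery, while the paper's route buys that the $W$-dependence of the exponential factor is handled once and for all by the quoted lemma and that no calculus in $t$ is needed, since in the telescoping every factor carries its own damping.

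The one step you assert without justification is the identity $\txx(F,(f_k),W)-\txx(F,(f_k),W')=\int_0^1\frac{d}{dt}\txx(F,(f_k),W_t)\,\dt$, i.e.\ differentiation under the integral over $\sss^r$ together with the fundamental theorem of calculus on $[0,1]$. This is not completely innocent here: the integrand is integrable only because of the exponential damping, and that damping moves with $t$; near $t=0$ the factor $e^{-\gl_{W_t}}$ essentially damps only the $W'$-marginals, so there is no obvious dominating function valid on all of $[0,1]$, and even continuity of $t\mapsto\txx(F,(f_k),W_t)$ at the endpoints needs an argument. The gap is fixable: on compact subintervals of $(0,1)$ one has $\gl_{W_t}\ge\min(t,1-t)\,\gl_{W+W'}$, which together with $W_t\le W+W'$ and $|W-W'|\le W+W'$ gives a dominating function and hence differentiability there, with your bound $\bigabs{\frac{d}{dt}\txx(F,(f_k),W_t)}\le C\cn{W-W'}\prod_k\sn{f_k}$ uniform in $t$; endpoint continuity can then be checked separately (for instance by expanding $\prod_{ij\in E(F)}W_t$ and applying the same damping trick term by term), or the calculus can be avoided altogether by telescoping with the paper's per-edge factors, whose mixed products remain integrable. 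With this point addressed, your constants depend only on the degrees in $F$, as the lemma requires.
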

\begin{proof}
The proof is a simple extension of \cite[Theorem 2.3]{cutsub}, so we
only outline 
the differences.

Firstly, writing each $f_k$ as the sum of its positive and negative parts,
we may assume without loss of generality that $f_k\ge 0$ for each $k$.
Also, we may rescale so that $\sn{f_k}=1$ for all $k$.

Given a tree $F$ with $r$ vertices in which each edge has an arbitrary direction,
and for every edge $ij\in F$ a (not necessarily symmetric)
kernel $W\sij\in \cw$, set
\begin{equation}\label{ty}
 \ty\bigpar{F,(W\sij)_{ij\in E(F)}}
\=
\int_{\sss^r} \prod_{ij\in E(F)} W_{ij}(x_i,x_j) 
\dd \mu(x_1)\dotsm \dd \mu(x_r).
\end{equation}
Note that we have omitted both the exponential factors and the factors $f_k(x_k)$
from \eqref{txf}. As in~\cite{cutsub}, given $W\in \cw$ let
\begin{equation}\label{Wabdef}
  \wab(x,y)\=e^{-a\glw(x)} W(x,y) e^{-b\glw'(y)}.
\end{equation}
Also, let
\[
 W_{(ij)} \= f_i(x)^{1/d_i} W(x,y) f_j(y)^{1/d_j},
\]
where $d_i$ is the degree of vertex $i$ in $F$.
It is shown in~\cite[Lemma 2.4]{cutsub} that the map $W\mapsto \wab$ is
Lipschitz continuous with respect to the cut norm, with the constant
independent of $a$ and $b$.
Since $\sn{f_i},\sn{f_j}\le 1$, the linear map $W\mapsto W_{(ij)}$
cannot increase the cut norm, so it and the composition
$W\mapsto \wab_{(ij)}$ are Lipschitz continuous.
Noting that
\[
  \txx(F,(f_k),W)
 =
 \ty\bigpar{F,(W^{(1/d_i,1/d_j)}_{(ij)})\sij},
\]
and that the marginals of $W^{(a,b)}_{(ij)}$ are at most those of $\wab$ and
are hence bounded by constants depending only on $a$ and $b$, 
the rest of the proof of~\cite[Theorem 2.3]{cutsub} goes through unchanged.
\end{proof}
 
\refL{l_FWx} corresponds roughly to counting tree components
of a given size in a certain random graph
by a weight which is a product of the weights of their vertices. In fact, we wish to count
{\em vertices} in such trees by a certain weight, i.e., to count trees by a weight that is 
the sum of the weights of their vertices.

Given a finite graph $F$ with vertex set \set{1,\dots,r},
an integrable function $f:\sss\to\RR$, and $W\in \cws$, let
\begin{equation}\label{txfp}
  \txp(F,f,W)
\=
 \int_{\sss^r} \sum_{k=1}^r f(x_k) \prod_{ij\in E(F)} W(x_i,x_j) 
 \prod_{k=1}^{r} e^{-\glw(x_k)}
 \dd \mu(x_1)\dotsm \dd \mu(x_r).
\end{equation}

\begin{lemma}\label{l_FWp}
Let $F$ be a tree and $f$ a bounded function on $\sss$.
Then $W\mapsto \txp(F,f,W)$ is a bounded map on $\cws$ that is
Lipschitz continuous in the cut norm. More specifically, there exists a
constant $C$ (depending on $F$ only) such that 
$|\txp(F,f,W)|\le C\sn{f}$ for all 
$W\in\cws$, and $|\txp(F,f,W)-\txp(F,f,W')|\le C \cn{W-W'}\sn{f}$ for all
$W,W'\in\cws$.
\end{lemma}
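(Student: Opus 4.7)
My plan is to observe that $\txp$ is just a finite sum of instances of the quantity $\txx$ from the previous lemma, so that the result drops out of \refL{l_FWx} essentially for free. Concretely, by linearity of the integral, the factor $\sum_{k=1}^r f(x_k)$ in \eqref{txfp} can be pulled out as a sum, yielding
\begin{equation*}
 \txp(F,f,W) = \sum_{k=1}^r \txx\bigpar{F,(f^{(k)}_j)_{j=1}^r,W},
\end{equation*}
where $f^{(k)}_k = f$ and $f^{(k)}_j \equiv 1$ for $j\ne k$. This is the only real step; everything else is bookkeeping.

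Once this identity is in place, I would simply apply \refL{l_FWx} to each of the $r$ summands. For each fixed $k$, the product $\prod_j \sn{f^{(k)}_j}$ equals $\sn{f}\cdot 1^{r-1}=\sn{f}$, so the boundedness bound gives $|\txx(F,(f^{(k)}_j),W)|\le C_0 \sn{f}$ with $C_0$ depending only on $F$, and likewise the Lipschitz bound gives $|\txx(F,(f^{(k)}_j),W)-\txx(F,(f^{(k)}_j),W')|\le C_0 \cn{W-W'}\sn{f}$. Summing over the $r=|F|$ values of $k$ yields the claimed bounds with constant $C=rC_0$, which still depends only on $F$.

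There is no serious obstacle here; the only point to verify carefully is that the argument of \refL{l_FWx} is applied to the correct tuple of functions and that constant functions are admissible as the $f_j$ (which they are, since $\sn{1}=1<\infty$). I would also note, for readers' convenience, the combinatorial interpretation promised in the paragraph preceding the statement: while $\txx$ weights a tree component by the product of vertex weights, $\txp$ replaces this product by a sum over vertices, which is exactly the linear combination above. No decomposition of $F$, no new estimate on marginals, and no coupling argument is needed beyond what \refL{l_FWx} already supplies.
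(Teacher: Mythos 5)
Your proposal is correct and is exactly the paper's argument: write $\txp(F,f,W)$ as a sum of $|F|$ terms of the form $\txx(F,(f_k),W)$ with one $f_k$ equal to $f$ and the rest equal to the constant function $1$, then apply \refL{l_FWx} to each term and sum the constants. Nothing further is needed.
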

\begin{proof}
Write $\txp(F,f,W)$ as a sum of $|F|$ terms $\txx(F,(f_k),W)$; in each, one of the
$f_k$ is equal to $f$, and the others are the constant function $1$.
\end{proof}

Although we shall not use this, let us note a corollary.
\begin{corollary}
Let $\ka_n$ be a sequence of kernels with $\cn{\ka_n-\ka}\to 0$.
Then for each fixed $k$ we have $\on{\rho_k(\ka_n;\cdot)-\rho_k(\ka;\cdot)}\to 0$,
and $\on{\rho(\ka_n;\cdot)-\rho(\ka;\cdot)}\to 0$.
\end{corollary}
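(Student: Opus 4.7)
The plan is to expand $\rho_k(\ka;x)$ as a finite linear combination of tree integrals of the form $\txx(F,(f,1,\dots,1),\ka)$, read off cut-norm continuity from \refL{l_FWx}, and recover $L^1$ convergence by duality; the survival probability will then follow from a truncation argument combined with the already-known continuity of $\rho$.

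For the first assertion, the Poisson branching-process calculation gives the tree expansion
\[
 \rho_k(\ka;x)=\sum_{T:\,|T|=k}\frac{1}{|\Aut_{\mathrm r}(T)|}\int_{\sss^{k-1}}\prod_{ij\in E(F_T)}\ka(x_i,x_j)\prod_{v=1}^{k}e^{-\gl_\ka(x_v)}\dd\mu(x_2)\dotsm\dd\mu(x_k)\Big|_{x_1=x},
\]
where the (finite) sum is over unlabelled rooted trees $T$ with $k$ vertices, $F_T$ is any chosen labelling with vertex set $\{1,\dots,k\}$ and root~$1$, and $\gl_\ka(x)\=\int\ka(x,y)\dd\mu(y)$. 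Multiplying by $f(x)$ and integrating gives
\[
 \int f(x)\rho_k(\ka;x)\dd\mu(x)=\sum_{T}\frac{1}{|\Aut_{\mathrm r}(T)|}\,\txx\bigpar{F_T,(f,1,\dots,1),\ka}.
\]
Applying \refL{l_FWx} to each summand, with $\sn{f_1}=\sn{f}$ at the root and $\sn{f_j}=1$ for $j\ge 2$, yields
\[
 \Bigabs{\int f(x)\bigpar{\rho_k(\ka_n;x)-\rho_k(\ka;x)}\dd\mu(x)}\le C_k\sn{f}\cn{\ka_n-\ka}
\]
for a constant $C_k$ depending only on $k$; taking $\sup_{\sn{f}\le 1}$ and using $\on{g}=\sup_{\sn{f}\le 1}\bigabs{\int fg\dd\mu}$ gives $\on{\rho_k(\ka_n;\cdot)-\rho_k(\ka;\cdot)}\le C_k\cn{\ka_n-\ka}\to 0$.

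For the survival probability, write $1-\rho(\ka;x)=\sum_{k\ge 1}\rho_k(\ka;x)$. Given $\epsilon>0$, pick $N$ with $(1-\rho(\ka))-\sum_{k=1}^N\int\rho_k(\ka;x)\dd\mu(x)<\epsilon$; the first $N$ $L^1$-differences vanish as $n\to\infty$ by the first part, and the corresponding $\ka_n$-tail equals $(1-\rho(\ka_n))-\sum_{k=1}^N\int\rho_k(\ka_n;x)\dd\mu(x)$, which is below $2\epsilon$ for all large $n$ once we know $\rho(\ka_n)\to\rho(\ka)$. This last fact, the continuity of the total survival probability under cut-metric convergence, is part of the machinery of \cite{kernels,cutsub}; combining the three bounds yields $\on{\rho(\ka_n;\cdot)-\rho(\ka;\cdot)}\to 0$.

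The main obstacle is precisely this tail control. The constants $C_k$ produced by \refL{l_FWx} grow with the combinatorics of $k$-vertex trees, so one cannot simply sum the per-$k$ bounds across all $k$; the passage from $\rho_k$ to $\rho$ requires the external input $\rho(\ka_n)\to\rho(\ka)$. The tree expansion for $\rho_k$ and the single-$k$ Lipschitz estimate are routine once the correct combinatorics are identified.
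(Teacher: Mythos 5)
Your proposal is correct and follows essentially the same route as the paper: a finite tree expansion of $\rho_k(\ka;x)$ made Lipschitz in the cut norm via Lemma~\ref{l_FWx}, with $L^1$ convergence recovered by $L^\infty$ duality, and then a truncation in $k$ combined with the external input $\rho(\ka_n)\to\rho(\ka)$ from \cite{cutsub}. The only cosmetic difference is that you organise the expansion over rooted trees and apply Lemma~\ref{l_FWx} directly, whereas the paper sums over unrooted trees weighted by $1/\aut(T)$ and invokes Lemma~\ref{l_FWp} (itself just a sum of Lemma~\ref{l_FWx} terms), and uses $f=\operatorname{sign}$ of the difference rather than a supremum over $\sn{f}\le 1$; these are equivalent bookkeeping choices.
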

\begin{proof}
It is not hard to check that for any kernel $\ka'$ and any bounded $f$ we have
\[
 \int_\sss \rho_k(\ka';x) f(x) \dd\mu(x)  =  \sum_T \frac{1}{\aut(T)} \txp(T,f,\ka'),
\]
where the sum is over all isomorphism classes of trees on $k$ vertices.
(This generalizes (43) in~\cite{cutsub}; it is perhaps most easily seen
by considering a finite random graph associated to $\ka'$.)
\refL{l_FWp} thus gives
\[
 \lrabs{ \int_\sss (\rho_k(\ka_n;x)-\rho_k(\ka;x))f(x)\dd\mu(x) } \le C'\sn{f}\cn{\ka_n-\ka}
\]
for some constant $C'$. Taking $f(x)$ to be the sign of
$(\rho_k(\ka_n;x)-\rho_k(\ka;x))$, 
the first statement follows.

Turning to the second statement, first note that, summing over $k'\le k$, 
we have
\begin{equation}\label{sk}
 \on{\rho_{\le k}(\ka_n;\cdot)-\rho_{\le k}(\ka;\cdot)} \to 0
\end{equation}
for any fixed $k$. Let
\[
 \Delta_k(\ka')\=\rho(\ka')-\rho_{\le k}(\ka') = \on{\rho(\ka';x)-\rho_{\le k}(\ka';x)}.
\]
From \eqref{sk} and the triangle inequality, for any $k$ we have
\[
 \limsup_\ntoo \on{\rho(\ka_n;\cdot)-\rho(\ka;\cdot)} \le \Delta_k(\ka)+\limsup_\ntoo \Delta_k(\ka_n).
\]

With $k$ fixed, from \eqref{sk} we have $\rho_{\le k}(\ka_n)\to \rho_{\le k}(\ka)$,
so $\limsup_\ntoo \Delta_k(\ka_n)\le \Delta_k(\ka)+\limsup_\ntoo|\rho(\ka_n)-\rho(\ka)|$.
Theorem 1.9 from \cite{cutsub} tells us that $\rho(\ka_n)\to\rho(\ka)$, so this gives
$\limsup_\ntoo \Delta_k(\ka_n)\le \Delta_k(\ka)$, and hence
\[
 \limsup_\ntoo \on{\rho(\ka_n;\cdot)-\rho(\ka;\cdot)} \le  2\Delta_k(\ka)
\]
for any $k$. Letting $k\to \infty$, noting that $\rho_{\le k}(\ka)\upto \rho(\ka)$,
we have $\Delta_k(\ka)\to 0$, and the result follows.
\end{proof}

We now turn to the random graph equivalent of \refL{l_FWp}, again
using methods from~\cite{cutsub}. 
In the sequel we will for convenience take $(\sss,\mu)$ to be
$\oi$ with Lebesgue measure, but we will continue to write
$\sss$ and $\mu$ to emphasize that the results
easily extend to general spaces $\sss$. 
Let $(A_n)$ be a sequence of matrices and $\ka$ a kernel 
on $\sss=\oi$
with $\dcut(A_n,\ka)\to 0$,
and let $\ka_n'=\ka_{A_n}^{(\tau_n)}$ be a rearrangement of
$\ka_{A_n}$ chosen so that $\cn{\ka_n'-\ka}\to 0$. 
We write
\begin{equation}\label{Sv}
 S_v=S_{v,n}=\taun\qw((v-1)/n,v/n])
\end{equation}
for the subset of $\sss$
  corresponding to the vertex $v$ under 
this rearrangement.
Given a sequence $f_n$ of integrable functions on $\sss$, for $v\in V(G_n)$ set
\begin{equation}\label{fnv}
 f_n(v) = n \int_{S_{v,n}} f_n(x)\dd\mu(x),
\end{equation}
so $f_n(v)$ is the average of $f_n$ over $S_v$.
Note that if $\ka$ is finite type, $f_n$ depends only on the type, and
the rearrangement $\taun\qw$ maps each vertex into a single type, then $f_n(v)$ is simply
$f_n$ evaluated at the type of $v$. 

\begin{lemma}\label{lfnk}
With the definitions above, if the functions $f_n$ are uniformly integrable, then
for each fixed $k$ we have
\[
 \left| \frac{1}{n} \sum_{v\,:\,|C_v|=k} f_n(v) - \int_\sss f_n(x)\rho_k(\ka;x) \dd\mu(x)  \right| \pto 0,
\]
where $C_v$ is the component of $G_n=G(A_n)$ containing the vertex $v$.
\end{lemma}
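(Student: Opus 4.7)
The plan is to reduce to a first-moment computation over tree-components, then apply \refL{l_FWp} together with the tree expansion $\int_\sss \rho_k(\ka;x)f(x)\dd\mu(x)=\sum_T \aut(T)\qw\txp(T,f,\ka)$ (summed over isomorphism classes of trees on $k$ vertices) already used in the proof of \refC{c2} above. Write $X_n\=\frac{1}{n}\sum_{v:|C_v|=k}f_n(v)$ and $L_n\=\int_\sss f_n(x)\rho_k(\ka;x)\dd\mu(x)$; the goal is $X_n-L_n\pto 0$.

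For the first moment, fix an unlabelled tree $T$ on $k$ vertices. Each component of $G_n=G(A_n)$ isomorphic to $T$ corresponds to exactly $\aut(T)$ injections $(v_1,\ldots,v_k):V(T)\hookrightarrow[n]$, and the probability that the component of $v_1$ consists of $\{v_1,\ldots,v_k\}$ with induced edge set $T$ is a product of edge factors $a_{v_iv_j}/n$ over $E(T)$, within-$T$ non-edge factors of the form $1+O(1/n)$, and a boundary factor $\prod_{w\notin\{v_i\}}\prod_i(1-a_{v_iw}/n)$. After truncating entries of $A_n$ exceeding some $M(n)=o(n)$ as in \cite[Lemma 2.1]{cutsub} and the proof of \refC{c2}, the boundary factor is well approximated by $\prod_i e^{-\gl_n(x_{v_i})}$, where $\gl_n(x)\=\int\ka_n'(x,y)\dd\mu(y)$ and $\ka_n'=\ka_{A_n}^{(\tau_n)}$ is the rearrangement with $\cn{\ka_n'-\ka}\to 0$. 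Rewriting the sum over injections as an integral over $\sss^k$ against $\ka_n'$, weighted by $f_n(v_1)+\cdots+f_n(v_k)$, and observing that non-tree components of size $k$ contain an extra edge and are therefore counted by a standard subgraph-count bound of expected order $1/n$, one obtains
\[
  \E X_n = \sum_T \aut(T)\qw\txp(T,f_n,\ka_n') + o(1),
\]
summed over the finitely many trees on $k$ vertices. \refL{l_FWp} then yields $|\txp(T,f_n,\ka_n')-\txp(T,f_n,\ka)|\le C_T\sn{f_n}\cn{\ka_n'-\ka}\to 0$ for bounded $f_n$, and the tree expansion identity identifies the limit with $L_n$. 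To remove the boundedness assumption, split $f_n=f_n\ett{|f_n|\le M}+f_n\ett{|f_n|>M}$: the tail contributes at most $\int|f_n|\ett{|f_n|>M}\dd\mu$ to $|L_n|$ and, via $\frac{1}{n}\sum_v|f_n(v)|\le\int|f_n|\dd\mu$, also to $\E|X_n|$; both are $o_M(1)$ uniformly in $n$ by uniform integrability.

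Finally, to upgrade from convergence of means to convergence in probability, one estimates $\Var X_n$. The dominant contribution comes from pairs $v\ne w$ with $w\notin C_v$; conditional on $C_v$, the graph on $[n]\setminus C_v$ is of the form $G(A_n'')$ for an $(n-k)$-by-$(n-k)$ submatrix still satisfying $\dcut(A_n'',\ka)\to 0$ (since removing $k$ rows and columns perturbs $\dcut$ by only $O(k/n)$), so $\Pr(|C_w|=k\mid C_v)$ is close to its unconditional value. Pair contributions with $w\in C_v$ total $O(k/n)$. This gives $\Var X_n=o(1)$, and Chebyshev's inequality concludes the proof. The main obstacle is this concentration step, and in particular ensuring that the variance bound is uniform across the truncation level $M$ so as to combine cleanly with the truncation argument; the exponential approximation of the boundary factors is a routine truncation as in \refC{c2}.
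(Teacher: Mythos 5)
Your overall strategy coincides with the paper's: reduce to uniformly bounded $f_n$ via uniform integrability, truncate the large entries of $A_n$, compute the first moment of $X_n$ by summing over tree components through the functional $\txp$, use \refL{l_FWp} (Lipschitz continuity in the cut norm) to replace $\ka_n'$ by $\ka$, identify the limit via the tree expansion of $\rho_k(\ka;\cdot)$, and conclude with a second-moment estimate over pairs of disjoint components. This is exactly the route the paper takes by adapting the proof of Lemma 2.11 of \cite{cutsub}, with \refL{l_FWp} in place of Theorem 2.3 there; your conditional variance argument is just a rephrasing of the paper's ``sums over pairs of disjoint components'', and your worry about uniformity in the truncation level $M$ is moot, since $\frac1n\sum_v|f_n(v)-f_n'(v)|\le\on{f_n-f_n'}$ holds deterministically, so the truncation error is handled outside the concentration step.

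The one step that does not survive in this generality as you state it is the disposal of non-tree components: the claim that components of size $k$ containing a cycle are ``counted by a standard subgraph-count bound of expected order $1/n$'' is valid only when the entries of $A_n$ are uniformly bounded. Here, after truncation, the entries are merely $o(n)$ and the kernel $\ka$ is merely integrable, so naive subgraph counts (even of paths, since the marginal $\gl$ need not lie in $L^2$) can diverge, and the extra-edge factor buys only $M(n)/n$, not $1/n$. What is true, and what suffices, is that the number of vertices in components containing cycles is $\op(n)$; this is Lemma 2.10 of \cite{cutsub}, which the paper invokes at precisely this point, and it can alternatively be obtained by retaining the boundary factors $\prod_i e^{-\gl_n(x_i)}$ in your estimate, giving an expected count of order $M(n)=o(n)$ for fixed $k$ rather than $O(1/n)$. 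With that correction your argument matches the paper's.
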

(For reader who prefers to define the cut metric via couplings, the corresponding formulation
of this lemma concerns functions $f_n$ defined on the spaces on which the kernels
$\ka$ and $\ka_{A_n}$ are coupled.)
\begin{proof}
We claim that it suffices to consider the case
where the $f_n$ are uniformly bounded.
Indeed, given any $\eps>0$, we may find uniformly bounded approximations $f_n'$
to $f_n$ with $\on{f_n'-f_n}\le \eps$ for every $n$. Applying the uniformly
bounded case, and then letting $\eps\to 0$, the result follows.
Rescaling, we may and shall assume that $\sn{f_n}\le 1$ for all $n$.

Using \refL{l_FWp} in place of \cite[Theorem 2.3]{cutsub},
the proof is now essentially the same as that of \cite[Lemma 2.11]{cutsub}, {\em mutatis
mutandis}. We only outline the changes.
Let
\[
 X_n = X_n(G_n) \= \frac{1}{n} \sum_{v\,:\,|C_v|=k} f_n(v).
\]
Adding or deleting an edge of $G_n$ changes $X_n$ by at most
$2k\sn{f_n}/n\le 2k/n$.
It follows that, arguing as in the proof of~\cite[Lemma 2.8]{cutsub}, we may
assume that the matrices $A_n$ are {\em well behaved}, meaning that all diagonal
entries are zero, and the maximum entry of $A_n$ is $o(n)$ as $n\to\infty$.
As in~\cite{cutsub}, we may then switch to the Poisson multigraph version of $G(A_n)$; we
omit the details. Using \cite[Lemma 2.10]{cutsub}, the contribution to $X_n$ from components
$C_v$ that contain cycles is then $\op(1)$.
On the other hand, the contribution from components isomorphic to some particular tree $T$
has expectation 
\[
 o(1) + (1+o(1)) \frac{\txp(T,f_n,\ka_n')}{\aut(T)};
\]
the argument is as for the corresponding relation (40) in~\cite{cutsub}. Continuing as in~\cite{cutsub},
but using \refL{l_FWp}, it follows that $|\E X_n - a_n|\to 0$,
where $a_n=\int_\sss f_n(x)\rho_k(\ka;x) \dd\mu(x)$. Considering sums over pairs of disjoint 
components, one obtains $|\E X_n^2-a_n^2|\to 0$, giving $|X_n - a_n|\pto 0$ as claimed.
\end{proof}

The corresponding result for the giant component is an immediate consequence; 
this is the natural analogue of~\cite[Theorem 9.10]{kernels} in the
present context.  

\begin{theorem}\label{tlf}
Let $(A_n)$ be a (deterministic or random)
sequence of matrices and $\ka$ an irreducible kernel on $\oi$
with $\dcut(A_n,\ka)\pto 0$, let $\ka_n'=\ka_{A_n}^{(\tau_n)}$ be a
(random) rearrangement of 
$\ka_{A_n}$ chosen so that $\cn{\ka_n'-\ka}\pto 0$,
and let $f_n$ be a uniformly integrable sequence of functions $f_n:\oi\to\RR$.
Then
\[
 \left| \frac{1}{n} \sum_{v\in \cC_1(G_n)} f_n(v) - \int_\sss f_n(x)\rho(\ka;x) \dd\mu(x)  \right| \pto 0,
\]
where $G_n=G(A_n)$, and $f_n(v)$ is defined by \eqref{Sv} and \eqref{fnv}.
\end{theorem}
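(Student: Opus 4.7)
The plan is to deduce \refT{tlf} from \refL{lfnk} by standard truncation and summation, together with the complementary identity for the giant. First, by conditioning on $(A_n)$ (as in the proof of \refT{tc1}) we may assume the matrices are deterministic with $\cn{\ka_n'-\ka}\to 0$. Next, given $\eps>0$ pick $M$ with $\sup_n\int_\sss |f_n|\ett{|f_n|>M}\dd\mu<\eps$; writing $f_n=f_n^{(M)}+g_n^{(M)}$ with $f_n^{(M)}=f_n\ett{|f_n|\le M}$, the contribution of $g_n^{(M)}$ to either side of the claim is bounded in absolute value by $\int|g_n^{(M)}|\dd\mu<\eps$, using the Jensen bound $|g_n^{(M)}(v)|\le n\int_{S_v}|g_n^{(M)}|\dd\mu$ on the discrete side and $0\le\rho(\ka;\cdot)\le 1$ on the continuous side. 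Thus it suffices to treat uniformly bounded $f_n$ with $\sn{f_n}\le M$.

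The normalization \eqref{fnv} gives the exact identity $n\qw\sum_{v\in V(G_n)}f_n(v)=\int_\sss f_n\dd\mu$, so the target is equivalent to
\[
 n\qw\sum_{v\notin\cC_1(G_n)} f_n(v)\pto \int_\sss f_n(x)\bigpar{1-\rho(\ka;x)}\dd\mu(x).
\]
Fix $K\in\bbN$ and split the left-hand sum by whether $|C_v|\le K$ or $|C_v|>K$. Summing \refL{lfnk} over $k=1,\dots,K$ and noting that any vertex of $\cC_1$ with $|\cC_1|\le K$ contributes in total at most $MK/n=o(1)$,
\[
 n\qw\sum_{v\notin\cC_1,\,|C_v|\le K} f_n(v)\pto \int_\sss f_n(x)\rho_{\le K}(\ka;x)\dd\mu(x).
\]
The tail is bounded by $M$ times $n\qw\#\{v\notin\cC_1:|C_v|>K\}$; applying \refL{lfnk} with the constant sequence $1$ and using $|\cC_1|/n\pto\rho(\ka)$ from \eqref{gc}, this count converges in probability to $(1-\rho(\ka))-\rho_{\le K}(\ka)$, which tends to $0$ as $K\to\infty$ because $\rho_{\le K}(\ka)\upto 1-\rho(\ka)$ (i.e., the Galton--Watson process $\bpk$ either dies out or survives almost surely).

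Finally, the bound $|f_n|\le M$ together with monotone convergence of $\rho_{\le K}(\ka;x)\upto 1-\rho(\ka;x)$ gives $\bigabs{\int f_n\bigpar{1-\rho(\ka;\cdot)-\rho_{\le K}(\ka;\cdot)}\dd\mu}\to 0$ as $K\to\infty$, uniformly in $n$. Combining the three estimates (sending $n\to\infty$ for fixed $K$, then $K\to\infty$, then $\eps\to 0$, i.e.\ $M\to\infty$) finishes the proof. The main obstacle is purely organizational: coordinating the three limits $n,K,M$ while preserving in-probability convergence. The conceptual content is minimal once \refL{lfnk} is available, because the identity $\sum_k\rho_k(\ka;x)+\rho(\ka;x)=1$ allows us to read off the giant's footprint by complementation from the bounded-component statistics.
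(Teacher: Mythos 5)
Your proof is correct and follows essentially the same route as the paper: reduce to the complement of $\cC_1$ via the exact identity $n\qw\sum_v f_n(v)=\int_\sss f_n\dd\mu$, apply \refL{lfnk} summed over components of size at most $K$, and show that vertices in components of size greater than $K$ outside the giant are negligible using \eqref{gc} and $\rho_{\le K}(\ka)\upto 1-\rho(\ka)$. Your explicit truncation at level $M$ and the fixed-$K$-then-$K\to\infty$ bookkeeping are just a more detailed rendering of what the paper does by taking $K(n)\to\infty$ slowly and citing the results of~\cite{cutsub}.
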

\begin{proof}
As usual, by conditioning on the sequence $(A_n)$ (and now also on the
$\tau_n$), we may
assume that the $A_n$ are deterministic and $\cn{\ka_n'-\ka}\to 0$.

\refL{lfnk} extends immediately to a corresponding result summing over all components of size $1\le k\le K$
for any fixed $K$, and hence for $K=K(n)\to\infty$ sufficiently slowly. But the results of~\cite{cutsub}
show that only $\op(n)$ vertices in components of size more than
$K(n)$ are not in $\cC_1$, and conversely, trivially, at most $K(n)=o(n)$
vertices of $\cC_1$ are not in such components, so we obtain 
\[
 \left| \frac{1}{n} \sum_{v\notin \cC_1(G_n)} f_n(v) - \int_\sss f_n(x) (1-\rho(\ka;x)) \dd\mu(x)  \right| \pto 0.
\]
It remains only to note that
\[
 \frac{1}{n} \sum_{v\in V(G_n)} f_n(v) = \int_\sss f_n(x) \dd\mu(x)
\]
by definition of $f_n(v)$.
\end{proof}

We shall need the following simple observation concerning the cut norm. In this we
write $\cnm{\ka}{\mu}$ for the cut norm of $\ka$ defined with respect to a measure
$\mu$.
\begin{lemma}\label{lmult}
Let $\ka$ be a kernel on a measure space $(\sss,\mu)$ with $0<\mu(\sss)<\infty$,
and let $h$ be a non-negative measurable function on $(\sss,\mu)$.
Let $\nu$ be the measure defined by $\dd\nu(x)=h(x)\dd\mu(x)$.
Then
\[
 \cnm{\ka}{\nu} \le \sn{h}^2\cnm{\ka}{\mu}.
\]
\end{lemma}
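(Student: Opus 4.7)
The plan is to unfold the definition of the cut norm with respect to $\nu$ and absorb the Radon--Nikodym density $h$ into the test functions. Explicitly, for any measurable $f,g$ with $\sn{f},\sn{g}\le 1$, I would write
\[
 \int_{\sss^2} f(x)\ka(x,y)g(y)\dd\nu(x)\dd\nu(y)
 = \int_{\sss^2} \bigpar{f(x)h(x)}\ka(x,y)\bigpar{g(y)h(y)}\dd\mu(x)\dd\mu(y),
\]
so that the new effective test functions are $fh$ and $gh$.

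Next I would observe that $\sn{fh}\le\sn{f}\sn{h}\le\sn{h}$, and similarly for $gh$. Hence $\tf\=fh/\sn{h}$ and $\tg\=gh/\sn{h}$ satisfy $\sn{\tf},\sn{\tg}\le 1$ (assuming $\sn{h}>0$; if $\sn{h}=0$ then $\nu=0$ and both sides vanish). Pulling out the factor $\sn{h}^2$ and applying the definition of $\cnm{\ka}{\mu}$ gives
\[
 \Bigabs{\int_{\sss^2} f(x)\ka(x,y)g(y)\dd\nu(x)\dd\nu(y)}
 = \sn{h}^2 \Bigabs{\int_{\sss^2} \tf(x)\ka(x,y)\tg(y)\dd\mu(x)\dd\mu(y)}
 \le \sn{h}^2\cnm{\ka}{\mu}.
\]

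Finally I would take the supremum over admissible $f,g$ on the left to obtain $\cnm{\ka}{\nu}\le\sn{h}^2\cnm{\ka}{\mu}$. There is essentially no obstacle here: the argument is a one-line change of test function, with the only mild care being the degenerate case $\sn{h}=\infty$ (in which the bound is trivial) or $\sn{h}=0$ (in which $\nu$ is the zero measure and both sides of the inequality are zero).
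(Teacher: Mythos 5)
Your argument is correct and is essentially identical to the paper's proof: both absorb the density $h$ into the test functions, normalize by $\sn{h}$, and take the supremum. The only difference is your (harmless, slightly more careful) handling of the degenerate cases $\sn{h}=0$ and $\sn{h}=\infty$, which the paper leaves implicit.
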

\begin{proof}
Essentially immediate from \eqref{cntdef}. Indeed, for any $f$, $g$ with $\sn{f}$, $\sn{g}\le 1$,
\begin{multline*}
  \Bigabs{\int_{\sss^2} f(x)\ka(x,y)g(y)\dd\nu(x)\dd\nu(y)} \\
 =    \Bigabs{\int_{\sss^2} f(x)h(x)\ka(x,y)g(y)h(y)\dd\mu(x)\dd\mu(y)} \\
 =   \sn{h}^2  \Bigabs{\int_{\sss^2} \tf(x)\ka(x,y)\tg(y)\dd\mu(x)\dd\mu(y)},
\end{multline*}
where $\tf(x)=f(x)h(x)/\sn{h}$ has $\sn{\tf}\le 1$, and similarly for $\tg$.
The final integral is bounded by $\cnm{\ka}{\mu}$ by definition.
\end{proof}

Using \refT{tlf}, it is not hard to prove \refT{th1}.
\begin{proof}[Proof of \refT{th1}]
Given a kernel $\ka$ and real number $\delta$, let
\[
 m_{\delta}(\ka) = \sup_{\mu(A)\le \delta} \int_{A\times\sss}\ka(x,y) \dd\mu(x)\dd\mu(y),
\]
so $m_{\delta}(\ka)$ is the integral of the marginal of $\ka$ over the set
with measure $\delta$ where this marginal is maximal.
Note that
\[
 | m_\delta(\ka_1)-m_\delta(\ka_2) | \le \dcut(\ka_1,\ka_2).
\]
Also, if $\ka$ is integrable, then $m_{\delta}(\ka)\to 0$ as $\delta\to 0$.

Suppose now that $\ka$ is irreducible, and,
conditioning as usual, that the $A_n$ are deterministic with $\dcut(A_n,\ka)\to 0$. 
Suppose also that $\rho(\ka)>0$; otherwise, $\hhka=\ka$, while 
by \eqref{gc} the matrices $\tA_n$ are obtained from $A_n$
by deleting $\op(n)$ rows and columns, and the result follows easily.

Let $\ka_n'=\ka_n^{(\tau_n)}$ be a rearrangement of $\ka_{A_n}$ chosen so that
$\cn{\ka_n'-\ka}\to 0$.
As before, let  $S_v=S_{v,n}=\taun\qw((v-1)/n,v/n])$ be the subset
of $\sss$ corresponding to a vertex $v$ of $G_n$ under the rearrangement $\tau_n$.

Let $\nu_n$ be the {\em random} 
measure that agrees with Lebesgue measure $\mu$ on each $S_v$, $v\notin \cC_1$,
and is zero otherwise. Noting that $\nu_n(\sss)=(1-|\cC_1|/n)\pto \hmu(\sss)=1-\rho(\ka)$,
let $\nu_n'=\nu_n/\nu_n(\sss)$ be the rescaled version of $\nu_n$. 

Although it may appear that we have done our best to disguise this
fact, the kernel  
$\ka_n'$ on the measure space $(\sss,\nu_n')$ is simply a rearrangement of the kernel 
$\ka_{\tA_n}$, where $\tA_n$ is the submatrix of $A_n$ obtained by deleting rows and columns
corresponding to vertices in $\cC_1(G_n)$. Since $\dcut$ is unchanged by rearrangement,
indicating now the measure on the space (always $\sss$) on which our kernels are defined,
our aim is exactly to show that
\begin{equation}\label{aim}
 \dcut( (\ka_n',\nu_n'), (\ka,\hmu'))\pto 0.
\end{equation}
 
Fix $\eps>0$. From the comments at the start of the proof there is some $\delta$
such that $m_{\delta}(\ka)<\eps/2$, and then
\begin{equation}\label{mds}
 m_{\delta}(\ka_n')<\eps
\end{equation}
for $n$ large enough.

Let $\ka_f$ be a finite-type kernel approximating $\ka$ within $\eps$ in the $L^1$ norm, and hence
in $\dcut$:
\begin{equation}\label{kfk}
 \cn{\ka_f-\ka} \le \on{\ka_f-\ka} \le \eps,
\end{equation}
with $\ka_f$ constant on
the sets $A_i\times A_j$ for some partition $A_1,\ldots,A_r$ of $\sss$
into measurable sets.

Fix (for the moment) $1\le i\le r$. Applying \refT{tlf} with every $f_n$ equal to the indicator
function of $A_i$, we see that
\[
 \sum_{v\in \cC_1} \mu(S_v\cap A_i) \pto \int_{A_i}\rho(\ka;x)\dd\mu(x).
\]
Let 
\[
 \nu_{n,i} =  \sum_{v\notin \cC_1} \mu(S_v\cap A_i) = \mu(A_i)- \sum_{v\in \cC_1} \mu(S_v\cap A_i).
\]
Then, recalling \eqref{hmudef}, 
we have
\[
 \nu_{n,i}\pto \int_{A_i} (1-\rho(\ka;x))\dd\mu(x) = \hmu(A_i)
\]
for each $i$, and hence
\[
 \Delta \= \sum_{i=1}^r |\nu_{n,i}-\hmu(A_i)| \pto 0.
\]

Since our aim is to prove an `in probability' result, coupling appropriately,
we may condition on the random graphs $G_n$, and assume that $\Delta\to 0$. 
Note that all quantities we consider are now deterministic.

Recall that $\nu_n'=\nu_n/\nu_n(\sss)$ is the rescaled version of $\nu_n$,
and $\hmu'=\hmu/(1-\rho(\ka))=\hmu/\hmu(\sss)$ is the rescaled version
of $\hmu$.  
Since $\nu_n(A_i)=\nu_{n,i}$, it follows that
\[
 \Delta' \= \sum_{i=1}^r |\nu_n'(A_i)-\hmu'(A_i)| \to 0.
\]

Let $\nu_n^*$ be obtained by `tweaking' $\nu_n'$ so that $\nu_n^*(A_i)=\hmu'(A_i)$ for every $i$.
More precisely, recalling that the unnormalized measure $\nu_n$ had a $0/1$-valued
density function $f(x)=\dd\nu_n/\dd\mu$, we change $f$ on a set of measure $\Delta$
to obtain a $0/1$-valued $f'$ with $\int_{A_i}f'(x)\dd\mu(x)=\hmu'(A_i)$, and use $f'$ to define
the (normalized) measure $\nu_n^*$. Since the normalizing factors
are bounded (in the limit), it is not hard to check that for some constant $C$ we have 
\[
 \dcut( (\ka_n',\nu_n'), (\ka_n',\nu_n^*) ) \le 2m_{C\Delta}(\ka_n')
\]
for all large enough $n$. Indeed, we may couple the measures $\nu_n'$ and $\nu_n^*$
to agree with probability at least $1-C\Delta$. Alternatively,
we may rearrange the kernels to differ only where one
or both coordinates fall into some set of measure at most $C\Delta$.
(If we had $\int f=\int f'$, we could take this set to be simply the
set where $f$ and $f'$ differ.)

Since $\Delta\to 0$, \eqref{mds} shows that the right hand side above is $O(\eps)$.
Using $\ka_f$, it is now easy to complete the proof.
Note that $\cn{\ka_n'-\ka_f}\le \cn{\ka_n'-\ka}+\cn{\ka-\ka_f} \le \eps+o(1)$,
from \eqref{kfk} and our convergence assumption.
By Lemma~\ref{lmult}, we thus have
\[
 \dcut( (\ka_n',\nu_n^*) , (\ka_f,\nu_n^*) ) \le O(\eps)+o(1).
\]
Since $\ka_f$ is constant on each set $A_i\times A_j$, the kernel $\ka_f$ `only cares how much
measure falls in each $A_i$', and we have
\[
 \dcut( (\ka_f,\nu_n^*), (\ka_f,\hmu') )=0.
\]
But by \refL{lmult} again,
\[
 \dcut( (\ka_f,\hmu'), (\ka,\hmu') ) \le \cnm{\ka_f-\ka}{\hmu'} \le \cn{\ka_f-\ka} \le \eps.
\]
Putting the last four displayed inequalities together and using the triangle inequality, \eqref{aim} follows.
\end{proof}

\begin{ack}
This research was carried out during a visit of both authors 
to the programme ``Discrete Probability''
at Institut Mittag-Leffler, Djurs\-holm, Sweden, 2009.
\end{ack}

\newcommand\RSA{\emph{Random Struct. Alg.} }

\newcommand\vol{\textbf}
\newcommand\jour{\emph}
\newcommand\book{\emph}
\newcommand\inbook{\emph}
\def\no#1#2,{\unskip#2, no. #1,} 
\newcommand\toappear{\unskip, to appear}

\newcommand\webcite[1]{
\texttt{\def~{{\tiny$\sim$}}#1}\hfill\hfill}
\newcommand\webcitesvante{\webcite{http://www.math.uu.se/~svante/papers/}}
\newcommand\arxiv[1]{\webcite{arXiv:#1.}}

\end{document}